\documentclass[a4paper, 11pt]{amsart}
\usepackage{amsmath,amssymb,amscd}
\usepackage[colorlinks=true, allcolors=blue]{hyperref}
\usepackage{url}
\usepackage[latin1]{inputenc}
\usepackage{graphicx}
\usepackage{tikz-cd}
%
%
%
\usepackage{enumerate}
\makeatletter
%

%

%








%
%

%
\makeatother
%
%

\newtheorem{theorem}{Theorem}[section]

\newtheorem{proposition}[theorem]{Proposition}

\newtheorem{lemma}[theorem]{Lemma}
\theoremstyle{definition}
\newtheorem{definition}[theorem]{Definition}
\newtheorem{example}[theorem]{Example}
\newtheorem{remark}[theorem]{Remark}

%
%
%
%


\usepackage{color}




\title{Property \(P_{\text{naive}}\) for big mapping class groups}
\author{Tianyi Lou}
\date{\today}
\address{Universit\'e C\^ote d'Azur,
CNRS, LJAD (UMR CNRS 7351), Parc Valrose, 06108
Nice Cedex 2, France} 
\address{Institut Fourier, UMR 5582, Laboratoire de Math\'ematiques, Universit\'e Grenoble Alpes, CS 40700, 38058 Grenoble cedex 9, France} 
\email{tianyi.lou@univ-grenoble-alpes.fr/tianyi.lou@unice.fr}

\keywords{Big mapping class groups, Property \(P_{\text {naive }}\).}

\begin{document}

\begin{abstract}
We prove that the mapping class group of any infinite type surface containing a nondisplaceable subsurface of finite type satisfies the property \(P_{\text {naive }}\): for any finite collection of non-trivial elements $h_{1},h_{2},$ $\cdots, h_{n}$, there exists another element $g\neq 1$ of infinite order such that $\langle g, h_{i}\rangle \cong \langle g \rangle * \langle h_{i} \rangle$, for all $i$.  
\end{abstract}

\maketitle

\section{Introduction}
A group $G$ is said to have the \emph{property \(P_{\text {naive}}\)} if, for any finite subset \(F\) of \(G \backslash\{1\}\), there exists an element \(g \in G\) of infinite order such that, for every \(h \in F\), the subgroup \(\left\langle h, g\right\rangle\) of \(G\) is isomorphic to the free product \(\langle h\rangle *\left\langle g\right\rangle\). This witnesses a high degree of dynamical flexibility for the possible subgroups. This property was introduced by Bekka, Cowling and de la Harpe \cite{BCD95}, who observed that if a discrete group $G$ has the property \(P_{\text {naive }}\), then the reduced $C^{*}$-algebra $C^{*}_{r}(G)$ of $G$ is simple.
For a free group, this is a consequence of the Nielsen-Schreier theorem, which states that every subgroup of a free group is free.

Bekka, Cowling and de la Harpe proved that Zariski-dense discrete subgroups of connected simple groups of rank one with trivial center have property $P_{\text{naive}}$ \cite[Theorem 3]{BCD95}. 
A large class of examples is provided by acylindrically hyperbolic groups: as shown by Abbott and Dahmani \cite{AD18}, any such group with no nontrivial finite normal subgroups has property $P_{\text{naive}}$. 
This subsumes earlier results for relatively hyperbolic groups \cite{AM07} and for some groups acting on $\operatorname{CAT}(0)$ cube complexes \cite{KS16}. Chatterji and Martin gave criteria making such cubical actions acylindrically hyperbolic \cite{CM16}. 
See also \cite{MO15} for further acylindrically hyperbolic examples.

Surfaces are of \emph{finite type} if their fundamental group is finitely generated and of \emph{infinite type} otherwise. The mapping class group $\operatorname{Map}(S)$ is acylindrically hyperbolic if $S$ is of finite type (provided $\pi_{1}(S)$ is non-elementary) \cite{B08}. In the case $S$ is of infinite type, \(\operatorname{Map}(S)\) is never acylindrically hyperbolic \cite{BG18}. It nonetheless exhibits certain acylindrical-like features through mapping classes that preserve finite type subsurfaces. Mann and Rafi introduced the concept of the \emph{nondisplaceable subsurface} (see Definition \ref{nondis}) to study the geometric properties of \(\operatorname{Map}(S)\). This is a topological restriction because it is not automatic for infinite type surfaces. In fact, Horbez, Qing and Rafi \cite{HQR20} proved that if $S$ contains a nondisplaceable subsurface of finite type, then there exists a continuous, nonelementary, isometric action of \(\operatorname{Map}(S)\) on a hyperbolic space, constructed using the projection complex machinery of \cite{BBF15}. 

Recall that the \emph{complexity} of a connected orientable surface $S$ is defined as \(\xi(S)=3 g(S)+|E(S)|-3\). Our main result is the following.

\begin{theorem} \label{main}
Let $S$ be a connected orientable surface with positive complexity, and assume that $S$ contains a nondisplaceable connected subsurface of finite type. Then $\operatorname{Map}(S)$ has the property $P_{\text {naive}}$.   
\end{theorem}

We first explain the strategy of the proof of Theorem~\ref{main}. To prove the main result, we consider $n$ arbitrary mapping classes $h_{1}, h_{2}, \cdots, h_{n}$ of $\operatorname{Map}(S)$ and find an infinite order element $g$ such that, for each $ 1\leq i \leq n$, the group generated by $h_{i}$ and $g$ is a free product $\langle h_{i} \rangle * \langle g \rangle$. 

We will find the element $g$ by taking a nondisplaceable subsurface $K$ which intersects the support of the $h_{i}$ and selecting a \(K\)-pseudo-Anosov mapping class. An element \(f \in \operatorname{Map}(S)\) is \emph{\(K\)-pseudo-Anosov} if \(f\) preserves the isotopy class of the subsurface \(K \) of $S$ and, denoting by \(\widehat{K}\) a surface obtained from \(K\) by gluing a once-punctured disk on every boundary component of \(K\), the mapping class \(f\) induces a pseudo-Anosov mapping class of \(\widehat{K}\).

Horbez, Qing and Rafi in \cite[Theorem 2.9]{HQR20} already noticed that such an element is WWPD loxodromic (see Definition \ref{DefWWPD}). To prove $g$ and $h$ span a free product \(\langle h\rangle *\left\langle g\right\rangle\), we employ different approaches guided by the classification of elements \(h \in F \subset \operatorname{Map}(S)\) acting on a hyperbolic space.

\subsection*{Acknowledgments:}

I am deeply grateful to my advisors, Indira Chatterji and Fran\c{c}ois Dahmani, for their guidance, encouragement, and many illuminating discussions. I would also like to extend my appreciation to Juliette Bavard for her insightful conversations during a visit. Appreciation also goes to Yusen Long and Yibo Zhang for their helpful comments. I also thank the anonymous referees for their careful reading and helpful suggestions. Finally, thanks to the China Scholarship Council for their fellowship support.

\section{Preliminary}

\subsection{Surfaces and mapping class groups}\label{2.1}

In this context, a $\emph{surface}$ $S$ refers to an oriented, connected, second-countable, Hausdorff manifold of dimension two. Unless otherwise stated, $S$ has no boundary component.

By a theorem of Richards \cite{Ric63}, connected, orientable surfaces $S$ are classified up to homeomorphism by the triple $\left(g(S), E(S), E^{g}(S)\right)$, where $g(S)$ is the genus of $S$, $E(S)$ is the space of ends of $S$ and \(E^{g}(S)\subseteq E(S)\) is the subset of ends accumulated by genus, i.e. every neighborhood \(U\) of \(e\in E^{g}(S)\) has infinite genus.

A \emph{subsurface} $K\subset S$ is a closed subset of $S$ such that $K$ is a surface and $\partial K$ is a finite union of pairwise disjoint simple closed curves contained in $\mathrm{int}(S)$. A simple closed curve $c\subset S$ is \emph{essential} if it does not bound a disk or a once-punctured disk.
We call $K$ \emph{essential} if every component of $\partial K$ is an essential
simple closed curve in $S$.  

Let \(\operatorname{Homeo}^{+}(S)\) be the group of all orientation-preserving homeomorphisms \(S \rightarrow S\). We denote by \(\operatorname{Homeo}_{0}(S)\) the connected component of the identity in \(\operatorname{Homeo}^{+}(S)\). The $\emph{mapping~class~group}$ of \(S\) is
$$
\operatorname{Map}(S):=\operatorname{Homeo}^{+}(S) / \operatorname{Homeo}_{0}(S).
$$
The group $\operatorname{Map}(S)$ is equipped with the quotient topology of the compact-open topology on the group \(\operatorname{Homeo}^{+}(S)\).

\subsection{Nondisplaceable subsurfaces}
The notion of nondisplaceable subsurface of $S$ was introduced in the work of Mann and Rafi \cite{MR23}.

\begin{definition}\label{nondis}
A connected, finite type subsurface \(K\) of a surface $S$ is \emph{nondisplaceable} if $f(K) \cap K \neq \emptyset$
for all \(f \in \operatorname{Homeo}(S)\). A non-connected subsurface \(K\) of a surface $S$ is nondisplaceable if, for every $f \in \operatorname{Homeo}(S)$ there are connected components $K_{i}, K_{j}$ of $K$ such that \(f(K_{i}) \cap K_{j} \neq \emptyset\).
\end{definition}

\begin{example}
Generally, if $S$ has positive finite genus, any subsurface $K\subset S$ with $g(K)=g(S)$ is nondisplaceable (middle).
Using the space of ends gives many further examples: if $S$ is a surface with $|E(S)|\ge 3$, then any subsurface $K$ that separates the points of $E$ into distinct complementary components is nondisplaceable (left). The subsurface with all punctures in the punctured Loch Ness Monster surface is nondisplaceable (right).
\end{example}

\begin{figure}[htbp]
\centering
\includegraphics[height=3cm,width=12cm]{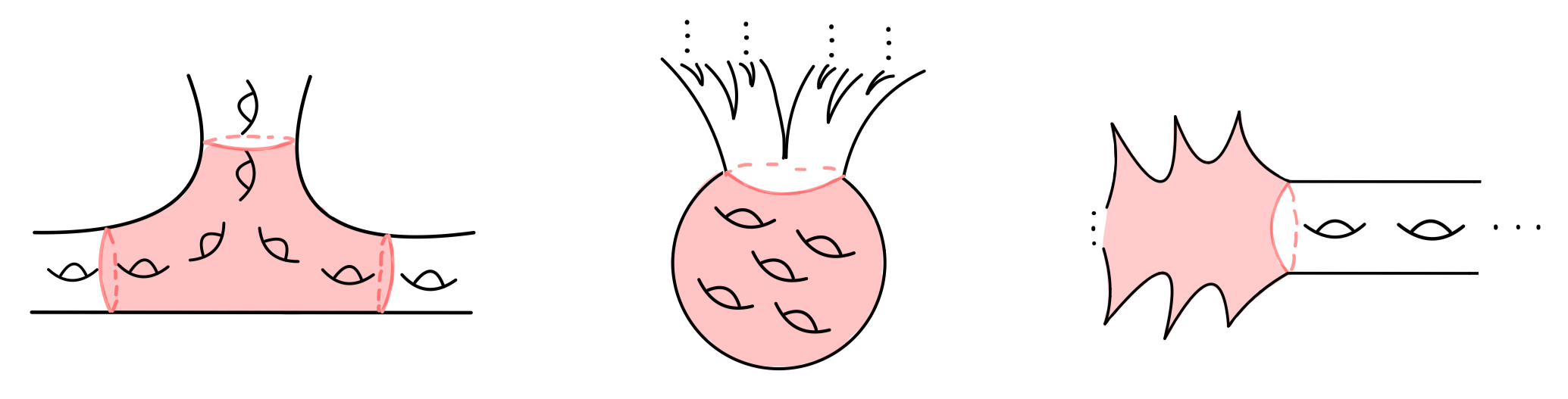}
\caption{Examples of nondisplaceable subsurfaces.}
\label{equator}
\end{figure}

Notice that if a connected subsurface $K$ contains a nondisplaceable subsurface $K^{\prime}$, then $K$ is also nondisplaceable. 

In the rest of this subsection, we prove auxiliary lemmas on nondisplaceable subsurfaces of an infinite type surface $S$ to facilitate the discussion of mapping class groups. We define the subgroup $\operatorname{Map}(K, \partial K)$ of $\operatorname{Map}(S)$ relative to $\partial K$ to be those mapping classes that fix the boundary $\partial K$ pointwise and, if $K$ has punctures, also fix them pointwise.

\begin{lemma}\label{inj}
Let $S$ be a connected, orientable surface containing a nondisplaceable subsurface $K^{\prime}$ of finite type. Then there exists a nondisplaceable connected, orientable, essential finite type subsurface $K$ containing $K^{\prime}$, such that the inclusion $K \subset S$ induces an inclusion of $\operatorname{Map}(K, \partial K)$ into $\operatorname{Map}(S)$. 

\end{lemma}

\begin{proof}

Every inclusion \(i:K^{\prime} \rightarrow S\) induces a natural mapping
$$
i_{*}: \operatorname{Map}(K^{\prime}, \partial K^{\prime}) \rightarrow \operatorname{Map}(S).
$$
If \([h]\) is the isotopy class of a homeomorphism of \(K^{\prime}\) fixing $\partial K^{\prime}$ pointwise, then \(i_{*}([h])\) is represented by extending
\(h\) to \(S\) using the identity on \(S \backslash K^{\prime}\). The kernel of \(i_{*}\) is generated by
$$
\left\{A_{1}, \ldots, A_{r}, B_{1}^{-1} B_{1}^{\prime}, \ldots, B_{s}^{-1} B_{s}^{\prime}\right\},
$$
where $A_{i}$ are the Dehn twists corresponding to the curves that bound a once-punctured disk for $i=1, \cdots, r$ and $B_{j}, B_{j}^{\prime}$ are the Dehn twists corresponding to the curves that bound an annulus for $j=1, \cdots, s$ \cite[Theorem 4.1]{PR00}. 

If $K^{\prime}$ is connected, let $K$ be a subsurface of $S$ consisting of $K^{\prime}$ and all the once-punctured disks and annuli of $S \backslash K^{\prime}$. Then, $K$ is also a nondisplaceable subsurface of $S$. The natural mapping $
i_{*}: \operatorname{Map}(K, \partial K) \rightarrow \operatorname{Map}(S)$ induced by inclusion \(i:K \rightarrow S\) is injective \cite[Corollary 4.2]{PR00}(or see \cite[Theorem 3.18]{FM11}).

If $K^{\prime}$ is not connected, then there exists a connected subsurface $K^{\prime\prime}$ of finite type of $S$ that contains $K^{\prime}$, since every connected surface can be exhausted by connected subsurfaces of finite type. Let $K$ be a subsurface of $S$ formed by $K^{\prime\prime}$ together with all once-punctured disks and annuli in $S \backslash K^{\prime\prime}$. This construction completes the proof.
\end{proof}

 The following lemma allows us to construct a larger nondisplaceable subsurface of finite type in any infinite type surface containing a nondisplaceable subsurface of finite type. 

\begin{lemma}\label{newlemma}
    Let $S$ be a surface and $K^{\prime} \subset S$ be a nondisplaceable subsurface of finite type. Let $c_{1}, c_{2}, \cdots, c_{r}$ be a family of essential simple closed curves on $S$. Then there exists a connected nondisplaceable subsurface $K$ of $S$ that contains $c_{1}, c_{2}, \cdots, c_{r}$. Moreover, we can choose $K$ to be essential.
\end{lemma}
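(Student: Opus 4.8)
The plan is to build $K$ as a filled regular neighborhood of $K'$ together with the curves, and then to verify the three defining properties---connectedness, nondisplaceability, and non-peripherality of the $c_i$---in turn, the last being the only delicate point. Throughout I read ``$c_i$ intersects $K'$'' as \emph{essential} intersection, i.e.\ $c_i$ cannot be freely homotoped to be disjoint from $K'$; this is the sensible interpretation and the one that makes the construction work.

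First I would reduce to the case where $K'$ is connected. Since $S$ is connected and admits an exhaustion by connected finite-type subsurfaces (as recalled in the proof of Lemma \ref{inj}), there is a connected finite-type subsurface containing $K'$; call it again $K'$. It still contains the original nondisplaceable subsurface, so by the observation following the definition of nondisplaceability it is itself nondisplaceable, and every $c_i$ still meets it essentially, because a curve that could be homotoped off the larger surface could a fortiori be homotoped off the smaller one. Next I would isotope $c_1,\dots,c_r$ into minimal position with respect to $\partial K'$ and to one another, and let $N$ be a closed regular neighborhood of $K' \cup c_1 \cup \cdots \cup c_r$ in $S$. Because $K'$ is connected and each $c_i$ meets $K'$, the set $K' \cup \bigcup_i c_i$ is connected, hence $N$ is a connected subsurface; it is of finite type, being a regular neighborhood of a compact set in $S$. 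Finally I would let $K$ be obtained from $N$ by filling in every complementary component that is a disk or a once-punctured disk. Then $K$ is a connected finite-type subsurface with $K' \subseteq \operatorname{int}(K)$, and since $K$ is connected and contains the nondisplaceable subsurface $K'$, it is nondisplaceable.

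It then remains to show each $c_i$ is non-peripheral in $K$. Since $c_i$ is essential in $S$ and $K \subseteq S$, it cannot bound a disk or a once-punctured disk in $K$, as such a disk would lie in $S$ and contradict essentiality. The only remaining possibility to exclude is that $c_i$ is isotopic in $K$ to a boundary component $\beta$ of $K$. If it were, then $c_i$ would be isotopic in $S$ to $\beta$; but $\beta \subseteq \partial K$ while $K' \subseteq \operatorname{int}(K)$, so $\beta \cap K' = \varnothing$, and $c_i$ would be homotopic to a curve disjoint from $K'$. This contradicts the hypothesis that $c_i$ intersects $K'$ essentially. Hence $c_i$ is non-peripheral in $K$, completing the construction.

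The heart of the matter, and the step I would be most careful about, is this last non-peripherality argument: it rests on arranging $K' \subseteq \operatorname{int}(K)$---automatic once $K$ contains a genuine neighborhood of $K'$---together with the essential-intersection reading of the hypothesis, which is exactly what rules out a boundary-parallel $c_i$. The connectedness bookkeeping when $K'$ is a priori disconnected is routine once the exhaustion step is in place, and the nondisplaceability is immediate from the monotonicity remark following the definition, so I do not expect either to present real difficulty.
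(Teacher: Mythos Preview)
Your argument is correct, and in fact more carefully written than the paper's own proof. The two take genuinely different routes. The paper fixes a pants decomposition of $S$ and inductively enlarges $K'$ by adjoining finitely many pairs of pants so as to swallow each $c_i$; it leaves the verification of non-peripherality implicit (presumably one adds enough pants so that $c_i$ lies in the interior and is not isotopic to any pants curve on the boundary). Your construction instead takes a filled regular neighborhood of $K'\cup c_1\cup\cdots\cup c_r$, which produces a tighter $K$ and, more importantly, makes the non-peripherality check transparent: since $K'\subseteq\operatorname{int}(K)$, any boundary component of $K$ is disjoint from $K'$, so a $c_i$ isotopic to it would have zero geometric intersection with $K'$. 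The paper's approach has the virtue of being one line, while yours pins down exactly where the ``essential intersection'' reading of the hypothesis is used---this is the right emphasis, since without it the conclusion can fail (a $c_i$ homotopic into $\partial K'$ would remain peripheral in any reasonable enlargement). Your reduction to connected $K'$ via an exhaustion is also a genuine improvement, since the paper's first case ``take $K=K'$'' tacitly assumes connectedness.
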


\begin{proof}
    If $c_{1}, c_{2}, \cdots, c_{r}$ are all contained in $K^{\prime}$, take $K=K^{\prime}$. 
    Without loss of generality, suppose that $c_{1}$ is not contained in $K^{\prime}$. Fixing a pants decomposition of $S$, we can always find $K_{1}$ consisting of finitely many pants such that it contains $K^{\prime}$ and $c_{1}$. By induction, take $K= \bigcup_{i=1}^{r}K_{i}$ to complete the proof. To make $K$ essential, add the finitely many missing disks, once-punctured disks, and annuli of $S$ bounded by components of $\partial K$. Therefore, we can choose $K$ to be essential.
\end{proof}

\begin{lemma}\label{h}
Let $S$ be a connected orientable surface with positive complexity, and assume that $S$ contains a nondisplaceable connected subsurface of finite type. Let $\{h_{1}, h_{2}, \cdots, h_{n}\}$  be a collection of non-trivial elements in $\operatorname{Map}(S)$. Then there exists a nondisplaceable surface $K$ of finite type such that the mapping class $h_i$ has no representative restricting to the identity on $K$. 
\end{lemma}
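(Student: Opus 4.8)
The plan is to reduce the statement to the faithfulness of the action of $\operatorname{Map}(S)$ on isotopy classes of essential simple closed curves, and then to enlarge a witnessing finite collection of curves into a single connected nondisplaceable finite-type subsurface using Lemmas \ref{inj} and \ref{newlemma}. Fix a connected nondisplaceable subsurface $K' \subsetneq S$ of finite type, supplied by hypothesis. For each $i$, since $h_i \neq 1$ in $\operatorname{Map}(S)$, I claim there is an essential simple closed curve $c_i$ with $h_i(c_i) \neq c_i$ as isotopy classes.

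This claim is where the main work lies: it amounts to the fact that $\operatorname{Map}(S)$ acts faithfully on the set of isotopy classes of essential simple closed curves. I would establish it via the Alexander method along an exhaustion $S = \bigcup_n \Sigma_n$ by connected finite-type subsurfaces. If some $h_i$ fixed the isotopy class of every essential simple closed curve, then in particular it would fix every curve of each $\Sigma_n$; isotoping $h_i$ to preserve $\Sigma_n$ (possible since it fixes the boundary classes), its restriction $h_i|_{\Sigma_n}$ would fix a filling system of curves and hence be isotopic to the identity for all large $n$ — the low-complexity exceptions on which fixing all curves fails to force triviality (the hyperelliptic cases) being excluded once $\Sigma_n$ is large, which the positive complexity of $S$ guarantees. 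Compatibility of these isotopies along the exhaustion then yields $h_i = 1$ in the compact-open topology, a contradiction.

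Next I would apply Lemma \ref{newlemma} — or, when some $c_i$ is disjoint from $K'$, the pants-decomposition construction in its proof — to the finite family $c_1, \dots, c_n$ together with $K'$, obtaining a connected finite-type subsurface $K \supseteq K'$ that contains each $c_i$ as a non-peripheral curve. Since $K$ is connected and contains the nondisplaceable subsurface $K'$, it is itself nondisplaceable by the remark following the definition of nondisplaceability, and it is of finite type and proper in the relevant (infinite-type) setting. It remains to verify that no $h_i$ induces the identity on $K$. If $h_i(K)$ is not isotopic to $K$, then $h_i$ manifestly does not act as the identity on $K$. If instead $h_i(K) \simeq K$, then $h_i$ may be realized preserving $K$, and the induced class $h_i|_K \in \operatorname{Map}(K)$ moves the isotopy class $c_i$: indeed $c_i$ is non-peripheral in $K$ with $h_i(c_i) \neq c_i$ in $S$, and since isotopy in $K$ implies isotopy in $S$, the classes $c_i$ and $h_i(c_i)$ remain distinct in $K$, so $h_i|_K \neq 1$.

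The main obstacle is the faithfulness claim in infinite type: one must handle the compact-open topology carefully, deal with the fact that $h_i$ need not preserve any fixed $\Sigma_n$, and explicitly exclude the finitely many small surfaces on which fixing all curves does not force triviality. Once faithfulness is available, the geometric assembly of $K$ and the final verification are routine consequences of Lemmas \ref{inj} and \ref{newlemma} and the closure of nondisplaceability under taking connected finite-type supersurfaces.
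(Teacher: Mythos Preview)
Your proposal is correct and follows essentially the same route as the paper: find for each $h_i$ a curve $c_i$ it moves, enlarge the given nondisplaceable subsurface to contain all the $c_i$ via Lemma~\ref{newlemma}, and then split on whether $h_i$ preserves the resulting $K$. You supply considerably more detail than the paper does on the faithfulness step (the paper simply asserts the existence of $c_i$) and you also flag the possibility that some $c_i$ misses $K'$, which the paper's invocation of Lemma~\ref{newlemma} glosses over.
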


\begin{proof}
As the Alexander method ensures that any non-trivial mapping class must move at least one simple closed curve, there exists a simple closed curve $c_i$ such that $h_i(c_i)$ is not homotopic to $c_i$.  Recall then that, by Lemma \ref{newlemma} we may replace $K$ once and for all so that it contains all the curves $c_i$. 

Fix $i$. If $h_i$ preserves $K$ and admits a representative $f$ with $f|_K=\mathrm{Id}$, then in particular
$f(c_i)=c_i$ since $c_i\subset K$, hence $h_i(c_i)$ is homotopic to $c_i$, a contradiction. Therefore, whenever $h_i$ preserves $K$, no representative of $h_i$ can restrict to the identity on $K$. On the other hand, if $h_i$ does not preserve $K$, then it cannot have a representative restricting to the identity on $K$.
   
\end{proof}

\textbf{Convention}: unless stated otherwise, we always assume that $S$ contains a nondisplaceable subsurface $K$ of finite type such that no component of $S \backslash K$ is an annulus whose boundary components are both boundary components of $K$, a one-punctured disk, or a disk. When we use Lemma \ref{newlemma} and Lemma \ref{h}, the convention will still be satisfied.

\subsection{Quasi-trees of metric spaces and WWPD}

We follow the projection data framework of \cite{BBF15}. Let $\mathbf{Y}$ be a set. For each $Y \in \mathbf{Y}$, let $\mathcal{C}(Y)$ denote an associated geodesic metric space. For \(X, Y \in \mathbf{Y}\), with $X \neq Y$, we are given a non-empty set $\pi_{Y}(X)\subseteq \mathcal{C}(Y)$, called the \emph{projection} of $X$ on $Y$. For any $X, Y, Z \in \mathbf{Y}$ with $X\neq Y$ and $Z\neq Y$, the following \emph{projection distance} is defined:
\[
d_{Y}(X, Z)= \operatorname{diam}_{\mathcal{C}(Y)}\bigl(\pi_{Y}(X) \cup \pi_{Y}(Z)\bigr).
\]

\begin{definition}
    The collection $\{(\mathcal{C}(Y), \pi_{Y})\}_{Y \in \mathbf{Y}}$ satisfies the \emph{projection axioms} for a real projection constant $\theta \geq 0$ if
    \begin{itemize}
    \item [(P1)] For any two distinct $X, Y \in \mathbf{Y}$, $\operatorname{diam}\bigl(\pi_{Y}(X)\bigr) \leq \theta;$
    \item [(P2)] For any $X, Y, Z \in \mathbf{Y}$, if $d_{Y}(X, Z) > \theta$, then $d_{X}(Y, Z) \leq \theta;$
    \item [(P3)] For any two distinct $X, Y \in \mathbf{Y}$, the set $\{Z \in \mathbf{Y} \mid d_{Z}(X, Y) > \theta\}$ is finite.
\end{itemize}
\end{definition}

Bestvina, Bromberg, and Fujiwara \cite{BBF15} define a connected graph $\mathcal{P}_{\alpha}(\mathbf{Y})$, called \emph{projection complex}, for $\alpha \geq 2 \theta$ which has the set of vertices as $\mathbf{Y}$, and there is an edge between $X, Z \in \mathbf{Y}$ if and only if $d_{Y}(X, Z) \leq \alpha$ for every $Y \in \mathbf{Y}$. Then they build the space $\mathcal{C}_{\alpha}(\mathbf{Y})$ by replacing each vertex $Y \in \mathbf{Y}$ with $\mathcal{C}(Y)$ and joining points in $\pi_{Y}(X)$ with points $\pi_{X}(Y)$ by an edge of length $L(\alpha) >0$ whenever $X$ and $Y$ has an edge between them in $\mathcal{P}_{\alpha}(\mathbf{Y})$, i.e. $d_{\mathcal{P}_{\alpha}(\mathbf{Y})}(X, Y)=1$. This quasi-tree depends on the choice of $\alpha$ and $L$. We fix $L$ as a function of $\alpha$ and assume that $\alpha$ is sufficiently large, then each $\mathcal{C}(Y)$ will be isometrically embedded in $\mathcal{C}_{\alpha}(\mathbf{Y})$ (\cite[Lemma 4.2]{BBF15}).

We say that a group \(G\) acting on \(\mathbf{Y}\) preserves the projection structure if for each $Y \in \mathbf{Y}$ and $g \in G$ there are isometries \(F_{g}^{Y}: \mathcal{C}(Y) \rightarrow \mathcal{C}(g(Y))\) so that
\begin{itemize}
    \item [(1)] \(F_{g^{\prime}}^{g(Y)} F_{g}^{Y}=F_{g^{\prime} g}^{Y}\) for all \(g, g^{\prime} \in G, Y \in \mathbf{Y}\) and
    \item [(2)] \(g(\pi_{Y}(X))=\pi_{g(Y)}(g(X))\) for all \(g \in G\) and \(X, Y \in \mathbf{Y}\).
\end{itemize}
If a group $G$ acts on \(\mathbf{Y}\) in this way, then the projection distances are preserved, i.e. \(d_{g(X)}(g(Y), g(Z))=d_{X}(Y, Z)\)
for all \(X, Y, Z \in \mathbf{Y}\) and \(g \in G\). This action of $G$ on \(\mathbf{Y}\) is said to be \emph{metric-preserving}. Therefore, \(G\)
acts isometrically on \(\mathcal{C}_{\alpha}(\mathbf{Y})\). 

Recall that a geodesic metric space $X$ is said to be \emph{$\delta$-hyperbolic} if every geodesic triangle in $X$ is \emph{$\delta$-slim}, i.e. each of its sides is contained in the $\delta$-neighbourhood of the union of the other two sides. 
We say that $X$ is \emph{Gromov hyperbolic} if it is $\delta$-hyperbolic for some $\delta \geq 0$. Pick a basepoint $z$ in a $\delta$-hyperbolic space $X$. The \emph{Gromov product} of $x,y \in X$ is defined to be $(x, y)_{z} = \frac{1}{2}\bigl(d(x,z) + d(y,z) - d(x,y)\bigr)$. Let \(G\) be a group acting by isometries on a \(\delta\)-hyperbolic metric space \(X\). For an arbitrary point \(x \in X\), we denote by \(\Lambda (G) \subseteq \partial X\) the set of limit points of \(G\). That is, $\Lambda (G)$ is the set of accumulation points of any orbit of G on $\partial X$. Thus,
$$
\Lambda (G):=\left\{p \in \partial X \mid p=\lim _{n \rightarrow \infty} g_{n} x \text { for some sequence } g_{n} \in G\right\}
.$$
The limit set is independent of the choice of \(x\). Given an element $g \in G$, we denote $\Lambda(\langle g \rangle)$ simply by $\Lambda(g)$ and call it the limit set of $g$.

The standard classification of groups acting on hyperbolic spaces goes back to Gromov \cite[Section 8.2]{Gro87}. Let $G$ act on a $\delta$-hyperbolic space $X$ by isometries.
An element \(g \in G\) is called \emph{elliptic} if \(\Lambda(g)=\varnothing\) (equivalently, all orbits of
\(\langle g\rangle\) are bounded); \emph{parabolic} if \(|\Lambda(g)|=1\); and \emph{loxodromic} if \(|\Lambda(g)|=2\). 

The following statement follows directly from the Bestivina-Bromberg-Fujiwara construction.  

\begin{theorem}\cite{BBF15}\label{hyp}
 Assume that there exists a collection $\{(\mathcal{C}(Y), \pi_{Y})\}_{Y \in \mathbf{Y}}$ satisfying the projection axioms \textnormal{(P1), (P2)}, and \textnormal{(P3)} with projection constant $\theta \geq 0$ and $\alpha > 3 \theta$. Then 
  \begin{enumerate}
     \item $\mathcal{P}_{\alpha}(\mathbf{Y})$ is a quasi-tree.
     \item If for some $\delta \geq 0$, each $\mathcal{C}(Y)$ is $\delta$-hyperbolic, then $\mathcal{C}_{\alpha}(\mathbf{Y})$ is $\delta$-hyperbolic.
     \item If for some $\Delta \geq 0$, each $\mathcal{C}(Y)$ is a quasi-tree with bottleneck constant $\Delta$, then $\mathcal{C}_{\alpha}(\mathbf{Y})$ is a quasi-tree.
 \end{enumerate}  
\end{theorem}

For convenience, we will assume that $\alpha$ is sufficiently large and omit it from the notation.

We recall the notion of \emph{weakly properly discontinuous} (shortly, WPD) isometries of hyperbolic metric spaces introduced by Bestvina and Fujiwara in \cite{BF02}. A loxodromic element $g$ in a hyperbolic action is WPD roughly when the group action is acylindrical along the axis of $g$. The condition WWPD was introduced by Bestvina, Bromberg, and Fujiwara in \cite{BBF15}. It is a weakening of WPD which allows the loxodromic element $g$ to have a large centralizer.

\begin{definition}\label{DefWWPD}
Let $G$ be a group acting by isometries on a $\delta$-hyperbolic metric space $X$.
 We say that \(g \in G\) is WWPD if
 \begin{enumerate}
     \item \(g\) is loxodromic and $\Lambda(g)=\{g^{+}, g^{-}\} \subseteq \partial X$,
     \item if \( h_{n} \in G\) with \(h_{n}\left(g^{+}\right) \rightarrow g^{+}\)and \(h_{n}\left(g^{-}\right) \rightarrow g^{-}\), then there exists \(N>0\)
such that for all \(n \geq N\)
$$
h_{n}\left(g^{+}\right)=g^{+} \text {and } h_{n}\left(g^{-}\right)=g^{-}
$$    
 \end{enumerate}  
\end{definition}

Another equivalent definition is the following. An element $g\in G$ is WWPD
with respect to the $G$-action on a hyperbolic space $X$ if $g$ is loxodromic
and the $G$-orbit of its ordered pair of fixed points $g^{\pm}=(g^{-},g^{+})$
is a discrete subset of $(\partial X\times \partial X)\setminus\Delta$, where
$\Delta=\{(\xi,\xi)\mid \xi\in\partial X\}$ is the diagonal. Equivalently, the
$G$-orbit of the corresponding unordered pair $\{g^{-},g^{+}\}$ is discrete in
\[
\partial^{2}X := \bigl\{\{\xi,\eta\}\subset\partial X \mid \xi\neq \eta\bigr\},
\]
since $\partial^{2}X$ is the quotient of $(\partial X\times \partial X)\setminus\Delta$ by the involution $(\xi,\eta)\sim(\eta,\xi)$. See \cite{HM21}.

\subsection{Quasi-trees of curve graphs for big mapping class groups}
Let $S$ be a connected surface. The curve graph $\mathcal C(S)$ has vertices the isotopy classes of essential simple closed curves on $S$, and an edge joins two vertices if they admit disjoint representatives. Assume that S contains a connected nondisplaceable subsurface $K$ of finite type. Let $\mathcal{C}_{S}(K)$ be the graph whose vertices are the homotopy classes of simple closed curves on $S$ that have a representative contained in $K$ which is essential in $K$, where an edge joins two distinct isotopy classes if they can be realized by disjoint representatives in $S$. The curve graph $\mathcal{C}_{S}(K)$ is an induced subgraph of the curve graph $\mathcal{C}(S)$ of $S$. Also, $\mathcal{C}_{S}(K)$ only depends on the isotopy class of $K$: if $K$ and $K_{1}$ are isotopic, then there is a natural identification between $\mathcal{C}_{S}(K)$ and $\mathcal{C}_{S}(K_{1})$. We write $[K]$ for the isotopy class of $K$. Then we consider $\mathbf{Y}_{K}$ to be the orbit of $K$ under the metric-preserving action of $\operatorname{Map(S)}$. Each $K$ has an associated geodesic metric space $\mathcal{C}_{S}(K)$.

As \(K\) is nondisplaceable and of finite type, given any two non-isotopic subsurfaces \(K_{1}, K_{2} \in (\operatorname{Homeo}(S) \cdot K)\), at least one of the boundary components of \(K_{2}\) intersects the subsurface
\(K_{1}\) in an essential curve or arc (since $K$ is nondisplaceable we have $K_1\cap K_2\neq\varnothing$, and if
$\partial K_2\cap K_1=\varnothing$ then $K_1$ and $K_2$ are nested and isotopic, contradicting $K_1\neq K_2$). For each of these arcs, closing them up along $\partial{K_{1}}$ in potentially two different ways gives us a collection of essential simple closed curves in $K_{1}$. For \(K_{1}, K_{2} \in (\operatorname{Map}(S) \cdot K)\), we define a
projection
$$
\pi_{K_{1}}\left(K_{2}\right) \subseteq \mathcal{C}_{S}\left(K_{1}\right),
$$
to be the union of all essential simple closed curves and closed up arcs coming from $\partial K_2\cap K_1$. Then we define distances between subsurface projections. For \(K_{1}, K_{2}, K_{3} \in (\operatorname{Map}(S) \cdot K)\)
, define
$$
d_{K_{1}}(K_{2}, K_{3})=\operatorname{diam}_{\mathcal{C}_{S}(K_{1})}\left(\pi_{K_{1}}(K_{2}), \pi_{K_{1}}(K_{3})\right).
$$

\begin{proposition}\cite{BBF15, HQR20}
    Let $S$ be a connected orientable surface that contains a connected nondisplaceable subsurface $K$ of finite type. Then the family $\{(\mathcal{C}_{S}(K_{1}), \pi_{K_{1}})_{K_{1}\in \mathbf{Y}_{K}}\}$ satisfies the projection axioms \textnormal{(P1), (P2)}, and \textnormal{(P3)}.
\end{proposition}

\begin{theorem}\cite{HQR20}\label{WWPD}
Let \(S\) be a connected orientable surface with positive complexity, and assume that
\(S\) contains a nondisplaceable connected subsurface \(K\) of finite type.
Then there exists an unbounded $\delta$-hyperbolic space \(\mathbb{X}=\mathcal{C}(\mathbf{Y}_{K})\) equipped with a continuous nonelementary isometric action of \(\operatorname{Map}(S)\) such that every element of \(\operatorname{Map}(S)\) which is \(K\)-pseudo-Anosov is a WWPD loxodromic element for the \(\operatorname{Map}(S)\)-action on \(\mathbb{X}\).
\end{theorem}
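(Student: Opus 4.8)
The plan is to construct $\mathbb{X}$ through the Bestvina--Bromberg--Fujiwara machinery and then extract the three required dynamical properties (unboundedness, non-elementarity, and the WWPD/loxodromic behaviour of $K$-pseudo-Anosov elements) from the projection structure together with classical Masur--Minsky theory applied to the finite-type pieces. First I would settle hyperbolicity and unboundedness. Each fibre $\mathcal{C}_S(K_1)$ with $K_1 \in \operatorname{Map}(S)\cdot K$ is, after capping off boundary components, quasi-isometric to the curve graph $\mathcal{C}(\widehat{K_1})$ of a finite-type surface, hence $\delta$-hyperbolic by Masur--Minsky; by the uniform hyperbolicity of curve graphs the constant $\delta$ may be taken independent of $K_1$, the crucial point being that all the $K_1$ are homeomorphic since they form a single $\operatorname{Map}(S)$-orbit. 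Feeding this common $\delta$ and the proposition that $\mathbf{Y}$ satisfies (P1)--(P3) into Theorem \ref{hyp} yields that $\mathbb{X}=\mathcal{C}(\mathbf{Y})$ is hyperbolic; unboundedness is then immediate, because each fibre embeds isometrically and totally geodesically in $\mathbb{X}$ and has infinite diameter (positive complexity of $\widehat{K}$).

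Next I would show that every $K$-pseudo-Anosov $f$ is loxodromic on $\mathbb{X}$. By definition $f$ fixes the isotopy class $[K]$, so $f$ preserves the fibre $\mathcal{C}_S(K)$ and acts there as the induced pseudo-Anosov of $\widehat{K}$, which by Masur--Minsky is loxodromic with an infinite-diameter axis inside $\mathcal{C}_S(K)$. Because $\mathcal{C}_S(K)\hookrightarrow\mathbb{X}$ is isometric and totally geodesic, the orbit $n\mapsto f^{n}x$ for $x\in\mathcal{C}_S(K)$ remains a quasigeodesic in $\mathbb{X}$, so $f$ is loxodromic in $\mathbb{X}$ and its fixed points $f^{\pm}\in\partial\mathbb{X}$ are the images of the endpoints of the $\widehat{K}$-axis under the boundary map of this embedding.

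The heart of the argument, and the step I expect to be the main obstacle, is the WWPD condition: discreteness of the $\operatorname{Map}(S)$-orbit of $(f^{+},f^{-})$ in $\partial^{2}\mathbb{X}$. Suppose $h_{n}(f^{\pm})\to f^{\pm}$. The decisive observation is that a long sub-segment $\sigma$ of the axis of $f$ lives in the fibre $\mathcal{C}_S(K)$, whereas axiom (P1) bounds by $\theta$ the projection onto $\mathcal{C}_S(K)$ of any distinct fibre $\mathcal{C}_S(K_1)$. Hence no geodesic shadowing $h_{n}(\sigma)\subseteq \mathcal{C}_S(h_{n}K)$ can fellow-travel a long piece of $\sigma$ unless $h_{n}K=K$; quantitatively, the convergence $h_{n}(f^{\pm})\to f^{\pm}$ forces $h_{n}(\sigma)$ to fellow-travel arbitrarily long pieces of the axis, and the bounded-projection and finiteness axioms (P1)--(P3) (that is, the quasi-tree geometry of the BBF complex) then force $h_{n}\cdot[K]=[K]$ for all large $n$. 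Thus eventually $h_{n}$ lies in $\operatorname{Stab}([K])$, which maps to $\operatorname{Map}(\widehat{K})$, and $h_{n}$ nearly fixes the endpoint pair of the pseudo-Anosov axis in $\mathcal{C}(\widehat{K})$. Since a pseudo-Anosov on a finite-type surface is WPD on its curve graph, with virtually cyclic stabiliser of its endpoint pair, this upgrades to $h_{n}(f^{\pm})=f^{\pm}$ exactly for $n$ large, which is precisely WWPD. The genuine work here is making the ``a long segment cannot be shadowed by a distinct fibre'' estimate precise in the BBF metric, and absorbing the finite ambiguity from the kernel of $\operatorname{Stab}([K])\to\operatorname{Map}(\widehat{K})$ (boundary Dehn twists), which act trivially on $\mathcal{C}_S(K)$ and so fix $f^{\pm}$ automatically.

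Finally, continuity of the $\operatorname{Map}(S)$-action follows from the equivariance of the BBF construction together with the continuity of the action on the orbit $\mathbf{Y}$, and non-elementarity follows by producing two independent loxodromics: taking $f$ together with a conjugate $gfg^{-1}$ for some $g$ with $g[K]\neq[K]$, the two axes lie in the distinct fibres $\mathcal{C}_S(K)$ and $\mathcal{C}_S(gK)$ and therefore have disjoint endpoint pairs in $\partial\mathbb{X}$, by the same projection-bound argument used for WWPD.
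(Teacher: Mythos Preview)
The paper does not prove this theorem; it is quoted from \cite{HQR20} and used as a black box, so there is no proof in the paper to compare against. Your outline is nonetheless a faithful sketch of the Horbez--Qing--Rafi argument: hyperbolicity via Theorem~\ref{hyp} with the uniform $\delta$ coming from a single homeomorphism type of fibre, loxodromicity of a $K$-pseudo-Anosov through the totally geodesic embedding $\mathcal{C}_S(K)\hookrightarrow\mathbb{X}$, and WWPD by first forcing $h_n\cdot[K]=[K]$ from the bounded nearest-point projections between distinct fibres in the BBF space and then invoking WPD of pseudo-Anosovs on $\mathcal{C}(\widehat K)$, with the kernel $\operatorname{Fix}_{\operatorname{Map}(S)}(K)$ acting trivially on $\mathcal{C}_S(K)$ and hence fixing $f^{\pm}$.

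One small correction: your non-elementarity argument assumes the existence of $g$ with $g[K]\neq[K]$, which is not part of the hypotheses (nondisplaceable does not mean the orbit is nontrivial). The cleaner route, and the one that works uniformly, is to take two independent pseudo-Anosovs on $\widehat K$; their axes already lie in the single fibre $\mathcal{C}_S(K)$ with disjoint endpoint pairs, which survives the totally geodesic embedding into $\mathbb{X}$.
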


\section{General results on the action of \texorpdfstring{\(\operatorname{Map}(S)\)}{Map(S)} on \texorpdfstring{\(\mathbb{X}\)}{X}} \label{3}

In this part, we will explore some useful facts regarding the continuous nonelementary isometric action of \(\operatorname{Map}(S)\) on \(\mathbb{X}\), where $\mathbb{X}=\mathcal{C}(\mathbf{Y}_{K})$ is an unbounded $\delta$-hyperbolic space
from Theorem \ref{WWPD} for a certain nondisplaceable surface.

 Let $S$ be a surface and $K \subset S$ be a subsurface of finite type obtained by removing finitely many pairwise disjoint separating simple closed curves. 
 The following lemma shows that the inclusion $K \subset S$ induces an inclusion of $\mathcal{C}(K)$ into $\mathcal{C}(S)$ and the image is precisely $\mathcal{C}_{S}(K)$.
\begin{lemma}\cite{HQR20}
    Let $S$ be a surface and $K \subset S$ be a nondisplaceable subsurface of finite type. Let $c$ and $c^{\prime}$ be two essential simple closed curves on $S$ that are contained and essential in $K$. If they are homotopic in $S$, then they are isotopic in $K$. 
    
    Moreover, if their isotopy classes have disjoint representatives in $S$, then they have disjoint representatives contained in $K$.
\end{lemma}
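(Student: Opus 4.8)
The plan is to reduce both assertions to the uniqueness of geodesic representatives by choosing a convenient hyperbolic metric. First I would fix a complete hyperbolic metric on $K$ (with cusps at its punctures and geodesic boundary), and extend it to a complete hyperbolic metric on $S$ in which $K$ is isometrically embedded and each component of $\partial K$ is a simple closed geodesic; this is possible because $\partial K$ consists of finitely many pairwise disjoint separating curves, so one glues complete hyperbolic pieces with matching geodesic boundary onto the finitely many components of $S\setminus K$, and the construction is insensitive to $S$ being of infinite type. By our standing convention no component of $S\setminus K$ is a once-punctured disk or an annulus with both boundary curves on $K$, so every component of $\partial K$ is essential and the inclusion $K\hookrightarrow S$ is $\pi_{1}$-injective.

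The key point, from which everything follows, is that the geodesic representative in $S$ of a curve essential in $K$ is contained in $K$. Since $c$ is essential in $K$ it is non-peripheral, hence represents a hyperbolic element $g\in\pi_{1}(K)$; by $\pi_{1}$-injectivity $g$ remains a nontrivial hyperbolic element of $\pi_{1}(S)$ for the chosen metric. Passing to the universal cover $\widetilde{S}=\mathbb{H}^{2}$, the preimage of $K$ is a disjoint union of convex regions bounded by the complete geodesics covering $\partial K$, and $g$ stabilises the component $\widetilde{K}_{0}$ containing a lift of $c$. The two endpoints of the axis of $g$ on $\partial\mathbb{H}^{2}$ lie in the closure of $\widetilde{K}_{0}$, so by convexity the axis lies in $\widetilde{K}_{0}$; projecting down, the closed geodesic $\gamma_{c}$ representing $c$ lies in $K$. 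Because the metric on $K$ is the restriction of that on $S$, this $\gamma_{c}$ is simultaneously the unique geodesic representative of $c$ in $S$ and in $K$, and likewise for $c'$.

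The first assertion is now immediate: if $c$ and $c'$ are homotopic in $S$ they have the same $S$-geodesic representative, i.e. $\gamma_{c}=\gamma_{c'}$; as these are also the $K$-geodesic representatives, $c$ and $c'$ are homotopic in $K$. For the second assertion I would use that geodesic representatives realise the geometric intersection number. If $[c]$ and $[c']$ admit disjoint representatives in $S$ then $i_{S}(c,c')=0$, so the $S$-geodesics $\gamma_{c}$ and $\gamma_{c'}$, which lie in $K$, are either disjoint or equal; if they are disjoint we are done, and if they coincide then $c$ and $c'$ are homotopic and two disjoint parallel copies inside an annular neighborhood of $\gamma_{c}$ in $K$ furnish the required disjoint representatives.

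I expect the containment claim of the second paragraph to be the only real obstacle; the two assertions then follow formally. Its proof rests on two points that must be set up with care. The first is the incompressibility of $K$, needed so that the lifts of $K$ are genuinely convex and the axis of $g$ sits in a single component; this is exactly what the convention on $S\setminus K$ guarantees. The second is the mild technical point that, although $S$ may be of infinite type, a complete hyperbolic metric with $\partial K$ geodesic and restricting to a prescribed metric on $K$ still exists, and the convex-hull argument in $\mathbb{H}^{2}$ does not see the global topological type of $S$. A purely topological alternative---realising the homotopy by an annulus, making it transverse to $\partial K$, and removing innermost disks and outermost bigons using essentiality of $\partial K$---is available but more delicate, so I would favor the geometric route above.
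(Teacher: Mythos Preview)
The paper does not contain a proof of this lemma; it is cited from \cite{HQR20} and stated without argument, so there is nothing in the paper itself to compare your proposal against.

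That said, your approach is the standard one and is correct. Once $\partial K$ is made totally geodesic in a complete hyperbolic metric on $S$, each lift of $K$ to $\mathbb{H}^{2}$ is a convex region bounded by complete geodesics, so the axis of any hyperbolic element of $\pi_{1}(K)$ stays in a single such lift; hence the $S$-geodesic representative of any curve essential in $K$ lies in $K$ and coincides with its $K$-geodesic representative. Uniqueness of geodesic representatives then yields both assertions immediately, exactly as you outline. The two technical points you flag---incompressibility of $K$ in $S$ (needed for the convexity argument and ensured by the standing convention on the components of $S\setminus K$) and the existence of a complete hyperbolic metric on a possibly infinite-type $S$ with prescribed geodesic boundary on $K$---are genuine but routine, and your treatment of them is adequate.
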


 Let $\widehat{K}$ denote the surface obtained from $K$ by gluing a once-punctured disk on every boundary component of $K$. The inclusion $K \hookrightarrow \widehat{K}$ induces a natural homomorphism $\operatorname{Map}(K) \rightarrow \operatorname{Map}(\widehat{K})$, whose kernel is free abelian, generated by Dehn twists about the boundary curves of $K$, by \cite[Proposition 3.9]{FM11}.  Let $\operatorname{Stab}_{\operatorname{Map}(S)}(K)$ be the subgroup of $\operatorname{Map}(S)$ made of all mapping classes that preserve the isotopy class of K and $\operatorname{Fix}_{\operatorname{Map}(S)}(K)$ be the subgroup of $\operatorname{Map}(S)$ made of all elements that have a representative $\psi \in \operatorname{Homeo}(S)$ such that $\psi(K)=K$ and $\psi|_{K}=\mathrm{Id}_{K}$. So there exists a homomorphism $\operatorname{Stab}_{\operatorname{Map}(S)}(K) \rightarrow \operatorname{Map}(\widehat{K})$, whose kernel is equal to $\operatorname{Fix}_{\operatorname{Map}(S)}(K)$ by \cite[Lemma 2.4]{HQR20}. Then the setwise stabilizer of $\mathcal{C}_{S}(K)$ is equal to $\operatorname{Stab}_{\operatorname{Map}(S)}(K)$ and the pointwise stabilizer of $\mathcal{C}_{S}(K)$ is equal to $\operatorname{Fix}_{\operatorname{Map}(S)}(K)$ for the action of $\operatorname{Map}(S)$ on $\mathcal{C}(S)$.

By Lemma \ref{inj}, the inclusion $K \subset S$ induces an inclusion of $\operatorname{Map}(K, \partial K)$ into $\operatorname{Map}(S)$. Let $h$ be an element of $\operatorname{Map}(S)$. Suppose that $h$ preserves the isotopy class of $K$ but the restriction on $K$ is not equal to the identity i.e. $h \in \operatorname{Stab}_{\operatorname{Map}(S)}(K) \backslash \operatorname{Fix}_{\operatorname{Map}(S)}(K)$. Therefore, we have always $h(\mathcal{C}_{S}(K))= \mathcal{C}_{S}(K)$ and $h|_{\mathcal{C}_{S}(K)} \neq  \mathrm{Id}_{\mathcal{C}_{S}(K)}$. The action of  $\operatorname{Map}(K)$ on $\mathcal{C}_{S}(K)$ is acylindrical proved by Bowditch in \cite{B08} and $\operatorname{Map}(K, \partial K)$ as a subgroup of $\operatorname{Map}(K)$ also acts acylindrically on $\mathcal{C}_{S}(K)$. Hence $\operatorname{Map}(K, \partial K)$ is an acylindrically hyperbolic group for sufficiently complex $K$ (the genus $g(K) \geq 2$ or the boundary components $b(K) > 3$). This condition can always be arranged, since the surface \(S\) is of infinite type. By \cite[Section 3.4]{FM11}, $\operatorname{Map}(K, \partial K)$ has no non-trivial finite normal subgroup. Therefore, $\operatorname{Map}(K, \partial K)$ has Property \(P_{\text {naive }}\) by the following theorem:

\begin{theorem}\cite{AD18}\label{ad}
Let \(G\) be an acylindrically hyperbolic group with no non-trivial finite normal subgroup. Then \(G\) has property \(P_{\text {naive. }}\)
\end{theorem}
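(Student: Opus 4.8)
The plan is to exploit a non-elementary acylindrical action of $G$ on a hyperbolic space $X$ and to produce the required element $g$ as a high power of a suitably chosen loxodromic element, verifying the free-product conclusion by a North--South ping-pong argument on the Gromov boundary $\partial X$. First I would record the structural consequences of acylindricity: $G$ contains loxodromic elements, and for any loxodromic $a$ the stabilizer $E(a)$ of the unordered pair $\{a^{+},a^{-}\}\subseteq\partial X$ is the unique maximal virtually cyclic subgroup containing $a$, while $a$ itself fixes $a^{+}$ and $a^{-}$ and acts on $\partial X$ with attracting/repelling North--South dynamics. The hypothesis that $G$ has trivial finite radical enters decisively here: the intersection $\bigcap_{a}E(a)$, taken over all loxodromic $a$, is a finite normal subgroup of $G$, hence trivial, so \emph{every} non-trivial element of $G$ fails to stabilize the endpoint pair of at least one loxodromic element. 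This is precisely what guarantees, for each non-trivial $h_{i}$, the existence of loxodromic directions on which $h_{i}$ acts non-trivially.

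Given the finite set $F=\{h_{1},\dots,h_{n}\}$, the heart of the argument is a selection step: I would find a single loxodromic element $g$ whose endpoint pair $\{g^{+},g^{-}\}$ is in general position with respect to all of $F$, meaning that for every $i$ neither $g^{+}$ nor $g^{-}$ is fixed by $h_{i}$ and the orbit $\{h_{i}^{b}(g^{\pm})\mid b\in\Z\}$ stays away from $\{g^{+},g^{-}\}$. To arrange this I would use that a non-elementary acylindrical action carries infinitely many pairwise independent loxodromic axes, while for each fixed $h_{i}$ the WPD property forces the configurations that $h_{i}$ can spoil to form a controlled (discrete, essentially finite) family of directions; intersecting the finitely many good-direction conditions coming from $h_{1},\dots,h_{n}$ then leaves a loxodromic axis that works simultaneously for the whole family.

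With such a $g$ fixed I would run the ping-pong on $\partial X$ for each index $i$. Choose pairwise disjoint open neighborhoods $U^{\pm}$ of $g^{\pm}$ together with neighborhoods of the relevant $h_{i}$-translates, all shrunk to be pairwise disjoint. By North--South dynamics, for all sufficiently large $N$ the map $g^{\pm N}$ sends the complement of a small neighborhood of $g^{\mp}$ into $U^{\pm}$, while the chosen $h_{i}$-dynamics carry $U^{+}\cup U^{-}$ off of itself; taking $N$ larger than the finitely many thresholds $N(i)$ produces a genuine ping-pong configuration for $g^{N}$ against each $h_{i}$. The Ping-Pong Lemma for free products then shows that every non-trivial reduced word in $g^{N}$ and $h_{i}$ moves a base point, so the canonical surjection $\langle g^{N}\rangle * \langle h_{i}\rangle \to \langle g^{N},h_{i}\rangle$ is an isomorphism. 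Since $g^{N}$ has infinite order and one power $N$ serves all $i$, renaming $g:=g^{N}$ yields property $P_{\mathrm{naive}}$.

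The step I expect to be the main obstacle is the selection lemma together with the uniform treatment of all dynamical types occurring among the $h_{i}$. An $h_{i}$ that is itself loxodromic may a priori share an endpoint with $g$, and an elliptic or parabolic $h_{i}$ may fix boundary points or have orbits accumulating uncontrollably; in every case I must show that the $\langle h_{i}\rangle$-orbit of $g^{\pm}$ can be kept away from $\{g^{+},g^{-}\}$, so that the ping-pong neighborhoods can be separated. This is exactly where acylindricity, through the WPD property and the resulting discreteness of the orbits of endpoint pairs, does the real work, and where the triviality of the finite radical is indispensable: it is what rules out a non-trivial $h_{i}$ stabilizing the endpoint pair of every loxodromic element, a situation that would manifestly obstruct any free-product decomposition with an infinite-order $g$. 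Establishing that a single $g$ defeats all of $h_{1},\dots,h_{n}$ at once, rather than one at a time, is the crux.
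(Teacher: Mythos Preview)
The paper does not give its own proof of this theorem; it is quoted from \cite{AD18}. However, the method of \cite{AD18} is visible in the paper through Proposition~\ref{pp}, which explicitly follows \cite[Proposition~2.1]{AD18}, so one can compare your proposal to that.

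Your approach is genuinely different from the one in \cite{AD18}. There the argument is \emph{not} a ping-pong on $\partial X$: given a reduced word $w=h^{m_{1}}g^{Nl_{1}}h^{m_{2}}\cdots$, one strings together geodesic segments in $X$ between the successive images of a base point and proves that the resulting broken path is a local, hence global, quasi-geodesic; in particular $w\cdot x_{0}\neq x_{0}$. The elliptic case is handled quantitatively in the space: acylindricity gives a uniform bound on $\operatorname{diam}\bigl(A_{10\delta}(g)\cap\operatorname{Fix}_{50\delta}(h)\bigr)$ whenever $h\notin E(g)$, and that diameter bound feeds directly into the local quasi-geodesic estimate (this is exactly the computation reproduced in Proposition~\ref{pp}).

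Your boundary scheme has a gap precisely at the elliptic case. For $P_{\mathrm{naive}}$ you need the ping-pong to work for \emph{every} non-trivial power $h^{m}$, not just large ones, so you need the entire $\langle h\rangle$-orbit of each endpoint $g^{\pm}$ to stay bounded away from $\{g^{+},g^{-}\}$. For loxodromic $h$ this follows from North--South dynamics once $g$ and $h$ are independent. But elliptic elements of infinite order occur in acylindrical actions (a Dehn twist in the $\operatorname{Map}(\Sigma)$-action on the curve graph, for instance), and for such $h$ the WPD/WWPD property only tells you that the orbit of the \emph{pair} $(g^{+},g^{-})$ is discrete in $\partial X\times\partial X\setminus\Delta$. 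It does not rule out $h^{m_{k}}(g^{+})\to g^{-}$ while $h^{m_{k}}(g^{-})$ converges to some third point $\eta\notin\{g^{+},g^{-}\}$; in that situation no choice of $U^{\pm}$ separates the $\langle h\rangle$-orbit of $g^{+}$ from $g^{-}$, and the ping-pong sets cannot be made disjoint. You appeal to ``discreteness of the orbits of endpoint pairs'' for exactly this step, but that hypothesis controls the pair, not each coordinate separately, and since the hyperbolic spaces in question are typically not proper, $\partial X$ need not be compact and equicontinuity arguments are unavailable. This is why \cite{AD18} abandons the boundary and works with quasi-axes and coarse fixed sets inside $X$.
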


Given an isometry \(g\) of a space \(X\), we say that a point $x$ is \emph{\(r\)-quasi-minimal}
for \(g\) if \(d(x, g(x)) \leq \inf _{y \in X} d(y, g(y))+r\). The set consisting of all \(r\)-quasi-
minimal points for \(g\) is an \emph{\(r\)-quasi-axis} for \(g\), which we denote \(A_{r}(g)\), i.e. \(A_{r}(g)=\{x \in X |d(x, g(x)) \leq \inf _{y \in X} d(y, g(y))+r\}\). If the
constant \(r\) is unimportant or clear from context, we may call \(A_{r}(g)\) simply a \emph{quasi-axis} for \(g\). If \(g\) is a loxodromic isometry on a $\delta$-hyperbolic space $X$, then
\(g\) fixes two points \(g^{+}\)and \(g^{-}\)on the boundary $\partial X$, where \(g^{+}\)is the attracting
fixed point of \(g\) and \(g^{-}\) is the repelling fixed point of \(g\).

Recall that an \emph{ending lamination} on a finite type surface is a geodesic lamination $\Lambda$ which is filling (every essential simple closed curve meets $\Lambda$, equivalently $S\setminus \Lambda$ is a union of disks and
once-punctured disks) and minimal, meaning that every leaf of $\Lambda$ is dense in $\Lambda$ (equivalently, $\Lambda$ has no proper nonempty closed
sublamination).
Observe that if $\Lambda$ is an ending lamination on $K_{1}$ and on $K_{2}$ (viewed as subsurfaces of the same ambient surface), then $K_{1}=K_{2}$ and both coincide with the support of $\Lambda$.
We will need the following lemma concerning elements that do not preserve $[K]$ and elements $K$-pseudo-Anosov mapping classes.

\begin{lemma}\label{1}
    Let $S$ be a connected orientable surface with positive complexity and $K \subset S$ be a nondisplaceable essential subsurface of finite type. Let \(\mathbb{X}\) be a hyperbolic space associated with $K$ as given by the Horbez-Qing-Rafi Theorem \cite{HQR20} on which \(\operatorname{Map}(S)\) admits a continuous nonelementary isometric action. Let $g \in \operatorname{Map}(S)$ be $K$-pseudo-Anosov. 
    
    Suppose $h \in \operatorname{Map}(S)$ does not preserve the isotopy class of $K$. Then $h$ does not preserve or flip the fixed points of $g$ on the boundary of $\mathcal{C}_{S}(K)$.
   
\end{lemma}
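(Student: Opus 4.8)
The plan is to exploit the quasi-tree structure of $\mathbb{X}=\mathcal{C}(\mathbf{Y})$ coming from the Bestvina--Bromberg--Fujiwara construction: each $\mathcal{C}_{S}(K_{1})$ with $[K_{1}]\in\operatorname{Map}(S)\cdot[K]$ sits inside $\mathbb{X}$ as an isometrically and totally geodesically embedded copy, the copies are pairwise disjoint, and for two distinct copies the nearest-point projection of one onto the other is a uniformly bounded set that is uniformly close to $\pi_{\mathcal{C}_{S}(K_{1})}(\mathcal{C}_{S}(K_{2}))$. The first and decisive step I would carry out is to show that distinct copies have \emph{disjoint limit sets} in $\partial\mathbb{X}$. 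Writing $\partial\mathcal{C}_{S}(K_{1})\subseteq\partial\mathbb{X}$ for the set of boundary points reached by geodesic rays that stay in the copy $\mathcal{C}_{S}(K_{1})$, I claim $\partial\mathcal{C}_{S}(K)\cap\partial\mathcal{C}_{S}(K_{1})=\varnothing$ whenever $[K_{1}]\neq[K]$. If not, a ray running to infinity inside $\mathcal{C}_{S}(K)$ would be asymptotic to one running to infinity inside $\mathcal{C}_{S}(K_{1})$; these rays would fellow-travel, producing unboundedly many pairs of points at bounded distance with one point in each copy. Projecting to $\mathcal{C}_{S}(K)$ and using that the nearest-point projection coarsely agrees with $\pi_{\mathcal{C}_{S}(K)}(\mathcal{C}_{S}(K_{1}))$, which is bounded by axiom (P1), would force that projection to be unbounded, a contradiction.

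Next I would record two facts about the element $g$ and the isometry $h$. Since $g$ is $K$-pseudo-Anosov it induces a pseudo-Anosov on $\widehat{K}$, hence acts loxodromically on $\mathcal{C}_{S}(K)$, and by the Horbez--Qing--Rafi description its axis in $\mathbb{X}$ is coarsely contained in the copy $\mathcal{C}_{S}(K)$; consequently its two fixed points satisfy $g^{+},g^{-}\in\partial\mathcal{C}_{S}(K)$. On the other hand, the equivariance isometry $F_{h}^{K}$ carries the copy $\mathcal{C}_{S}(K)$ isometrically onto the copy $\mathcal{C}_{S}(hK)$, so on the boundary $h(\partial\mathcal{C}_{S}(K))=\partial\mathcal{C}_{S}(hK)$; and since $h$ does not preserve the isotopy class of $K$ we have $[hK]\neq[K]$, so $\mathcal{C}_{S}(hK)$ is a copy distinct from $\mathcal{C}_{S}(K)$.

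With these in place the first assertion is immediate: if $h(g^{+})=g^{+}$, then from $g^{+}\in\partial\mathcal{C}_{S}(K)$ we would get $g^{+}=h(g^{+})\in\partial\mathcal{C}_{S}(hK)$, placing $g^{+}$ in $\partial\mathcal{C}_{S}(K)\cap\partial\mathcal{C}_{S}(hK)=\varnothing$; the same argument (applied also to $g^{-1}$) rules out $h(g^{-})=g^{-}$, so $h$ fixes neither endpoint of $g$. For the moreover part I would suppose $h$ elliptic and, for contradiction, that $h$ preserves $A_{10\delta}(g)$. Because $g$ is loxodromic, $A_{10\delta}(g)$ is a quasi-line whose two ends converge to $g^{+}$ and $g^{-}$, so any isometry preserving it maps $\{g^{+},g^{-}\}$ to itself; using that $h$ is elliptic (hence has bounded orbits and does not translate along the quasi-line) together with the first part, $h$ must swap the two ends, so $h(g^{+})=g^{-}$. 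But then $g^{-}=h(g^{+})\in\partial\mathcal{C}_{S}(hK)$ while also $g^{-}\in\partial\mathcal{C}_{S}(K)$, again contradicting the disjointness of the limit sets. Hence $h$ does not preserve $A_{10\delta}(g)$.

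I expect the main obstacle to be the separation statement of the first step, namely that the limit sets of distinct copies in $\partial\mathbb{X}$ are disjoint and that the axis of the $K$-pseudo-Anosov $g$ genuinely limits into $\partial\mathcal{C}_{S}(K)$. This is precisely the point at which the bounded-projection axiom (P1) and the total geodesy of the embedded copies must be used with care; once it is established, both conclusions of the lemma are purely formal consequences of the equivariance of the action on the boundary.
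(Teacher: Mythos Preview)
Your argument is correct and follows a genuinely different line from the paper's. The paper deduces the first assertion from Klarreich's identification of $\partial\mathcal{C}(K)$ with ending laminations on $\widehat K$: since $g^{+}$ corresponds to a filling lamination on $K$, a mapping class fixing it must preserve $[K]$. For the ``moreover'' the paper picks a sequence $x_i\to g^{+}$ inside $A_{10\delta}(g)$ and uses ellipticity to argue that $d(x_i,h(x_i))$ stays uniformly bounded, forcing $h(g^{+})=g^{+}$ and reducing to the first part. You instead work entirely inside the BBF quasi-tree: axiom (P1) together with the total geodesy of the embedded copies yields $\partial\mathcal{C}_S(K)\cap\partial\mathcal{C}_S(K_1)=\varnothing$ for $[K_1]\neq[K]$, after which both conclusions are formal consequences of equivariance on $\partial\mathbb{X}$. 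What this buys you is that you avoid invoking Klarreich's theorem, and your treatment of the second assertion also disposes of the possibility $h(g^{+})=g^{-}$ via the same disjointness argument---a case the paper's sequence argument does not explicitly address---so in effect you establish the quasi-axis conclusion without ever using that $h$ is elliptic. The one point requiring care is your separation claim; this is precisely where the isometric, totally geodesic embedding of the copies and the uniform boundedness of nearest-point projections between distinct copies (both part of the BBF package the paper cites) must be invoked, so be sure to quote those properties explicitly.
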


\begin{proof}
Suppose $h$ preserves the attracting fixed point of $g$ on the boundary of $\mathcal{C}_{S}(K)$. One has $h(g^{+})=g^{+}$. Masur and Minsky \cite{MM99} showed that the curve graph $\mathcal{C}_{S}(K)$ is $\delta$-hyperbolic and Klarreich \cite{K22} (See also \cite{HU06}) showed that the boundary of $\mathcal{C}_{S}(K)$ is homeomorphic to the space of ending laminations on $K$. It implies that $h$ preserves an ending lamination on $K$. It means that $h$ preserves its support $K$. This contradicts the fact that $h$ does not preserve the isotopy class of $K$. Similarly, it can be shown that $h$ does not preserve $g^{-}$.

If now $h(g^{+})=g^{-}$, then $h$ sends an ending lamination to another ending lamination with the same support. So $h$ preserves their support $K$, a contradiction.
\end{proof}

\begin{remark}
    The contrapositive is useful. If $h \in \operatorname{Map}(S)$ preserves a fixed point of a $K$-pseudo-Anosov element $g \in \operatorname{Map}(S)$ on the boundary of $\mathcal{C}_{S}(K)$, $h$ preserves the isotopy class of $K$.
\end{remark}

In a group $G$ acting by isometries on a geodesic $\delta$-hyperbolic space $X$, the following proposition extends the result of \cite[Proposition 2.1]{AD18} and provides a method to construct a free subgroup in $G$. The proof in
\cite[Proposition 2.1]{AD18} uses acylindricity to obtain a uniform bound on projections between quasi-axes of distinct conjugates. In our setting, we replace this input by the WWPD property of $g$, which provides the required bounded projection
constant. With this substitution, the ping-pong argument goes through unchanged.

\begin{proposition}\label{pp}
 Let $G$ be a group acting by isometries on a geodesic $\delta$-hyperbolic space $X$ and $h \in G$ be a non-trivial elliptic element for the action of $G$ on $X$. Assume that $g \in G$ is a WWPD loxodromic element on $X$ and $\operatorname{Fix}_{50\delta}(h) \cap A_{10\delta}(g)$ is bounded. Then there is a constant $C \geq 0$ such that for any integer $N>0$ for which the translation length of $g^{N}$ is at least $C$, the group generated by $g^{N}$ and $h$ is the free product of $\langle g^{N}\rangle$ and $\langle h\rangle$. Moreover, any elliptic subgroup of $\langle g^{N}, h\rangle$ is conjugate to a subgroup of $\langle h\rangle$.    
\end{proposition}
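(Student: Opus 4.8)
The statement is a ping-pong between the elliptic element $h$ and a high power of the loxodromic $g$, so the plan is to verify Klein's criterion by showing that every nontrivial reduced word in the free product $\langle g^{N}\rangle * \langle h\rangle$ acts nontrivially on $X$. Write $\gamma := A_{10\delta}(g)$ for the quasi-axis; since $g$ is loxodromic this is an unbounded, $g$-invariant quasi-geodesic, and I fix a basepoint $x_{0}\in\gamma$. Because $h$ is elliptic, the orbit $\langle h\rangle\cdot x_{0}$ is bounded, so $R_{0}:=\operatorname{diam}(\langle h\rangle\cdot x_{0})<\infty$. The idea is to track the orbit of $x_{0}$ under the successive prefixes of a reduced word $w=g^{Nk_{1}}h^{l_{1}}g^{Nk_{2}}h^{l_{2}}\cdots$: this produces a broken path in $X$ whose long segments run along translates of $\gamma$ and have length at least the translation length $\tau(g^{N})\ge C$, joined by short jumps of length at most $R_{0}$ coming from the powers $h^{l_{i}}$. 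If I can guarantee that consecutive long segments do not backtrack, the local-to-global principle for quasi-geodesics in a $\delta$-hyperbolic space will force this path to be an unparametrised quasi-geodesic, so that $d(x_{0},w x_{0})$ grows with the syllable length and in particular $w\neq 1$.

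The geometric heart of the argument is a uniform control on how a translate $h^{l}\gamma$ can run alongside $\gamma$. Concretely, I would prove that there is a constant $B\ge 0$, depending only on $\delta$, $D$ and $R_{0}$, such that for every $l\neq 0$ the nearest-point projection of $h^{l}\gamma$ to $\gamma$ has diameter at most $B$. This is where both standing hypotheses enter. First, the WWPD property of $g$ supplies a uniform projection bound: discreteness of the $G$-orbit of $(g^{+},g^{-})$ in $\partial X\times\partial X\setminus\Delta$ yields a constant $B_{0}$ so that any $f\in G$ with $f\{g^{+},g^{-}\}\neq\{g^{+},g^{-}\}$ satisfies $\operatorname{diam}(\pi_{\gamma}(f\gamma))\le B_{0}$, since otherwise a sequence of non-stabilising translates would fellow-travel $\gamma$ for arbitrarily long and force $(f g^{+},f g^{-})$ to accumulate at $(g^{+},g^{-})$. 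Second, the hypothesis $\operatorname{diam}(\operatorname{Fix}_{50\delta}(h)\cap\gamma)\le D$ is used to show that no nonzero power $h^{l}$ can coarsely stabilise the axis: if $h^{l}$ preserved $\{g^{+},g^{-}\}$ then, being elliptic, it would act on $\gamma$ with zero translation length and hence coarsely fix an arbitrarily long sub-segment of $\gamma$, which, after relating the coarse fixed set of $h^{l}$ to that of $h$, contradicts the diameter bound $D$. Combining the two points gives the uniform bound $B$.

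With this key lemma in hand, the ping-pong is essentially bookkeeping. I would choose $C$ so large that $\tau(g^{N})\ge C$ forces the per-syllable displacement along $\gamma$ to dominate $B+R_{0}+\delta$ by a definite multiplicative factor. Translating each corner of the broken path back by the appropriate group element reduces the no-backtracking check to the single model configuration in which $\gamma$ near $x_{0}$ meets $h^{l}\gamma$ near $h^{l}x_{0}$, where the bound on $\operatorname{diam}(\pi_{\gamma}(h^{l}\gamma))$ shows that the outgoing long segment leaves $\gamma$ before it can fold back across the incoming one; quantitatively this bounds the relevant Gromov product at the corner well below $\tfrac12\tau(g^{N})$. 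The standard broken-geodesic lemma then applies uniformly across all corners, giving $d(x_{0},w x_{0})>0$ for every nontrivial reduced $w$, hence injectivity of the canonical map $\langle g^{N}\rangle * \langle h\rangle\to\langle g^{N},h\rangle$; surjectivity is automatic, so the two groups are isomorphic.

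I expect the genuine difficulty to lie entirely in the key lemma of the second paragraph, and more precisely in ruling out that some power $h^{l}$ coarsely stabilises the unoriented axis of $g$. The orientation-preserving case is controlled by the $\operatorname{Fix}_{50\delta}$ hypothesis once one carefully passes between $h$ and its powers and reconciles the constants $10\delta$ and $50\delta$; the orientation-reversing case, in which $h^{l}$ would swap $g^{+}$ and $g^{-}$, is the most delicate, since a coarse inversion produces only a small fixed set and must instead be excluded using the WWPD discreteness together with the structure forced on the whole collection of axes $\{h^{i}\gamma\}$. Getting these constants to cooperate, and confirming that the hypothesis is exactly strong enough to kill every such stabilising power, is the step I would spend the most care on.
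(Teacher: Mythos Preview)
Your overall architecture matches the paper's: both track the orbit of a basepoint under the successive prefixes of a reduced word, use WWPD to bound the projection of one translate of the axis onto another whenever their endpoint pairs differ, feed this together with $D$ into a local-to-global quasi-geodesic criterion, and take $C$ to dominate all of these constants. The only cosmetic difference is the choice of basepoint: the paper places $x_{0}\in\operatorname{Fix}_{50\delta}(h)$ rather than on $\gamma$.

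The step you flag as delicate is in fact a genuine gap, and neither of your proposed routes can close it. For the orientation-preserving case, ``relating the coarse fixed set of $h^{l}$ to that of $h$'' is not possible in general: a bound on $\operatorname{Fix}_{50\delta}(h)\cap\gamma$ gives no control over $\operatorname{Fix}_{c}(h^{l})\cap\gamma$. For the orientation-reversing case, WWPD does not help: take $G=\langle g,h\rangle$ acting on $\mathbb{H}^{2}$ with $g$ a hyperbolic translation along a geodesic $\gamma$ and $h$ the rotation by $\pi$ about a point of $\gamma$, so that $hgh^{-1}=g^{-1}$ and $h^{2}=1$. Here $g$ is WWPD, $h$ is elliptic, and $\operatorname{Fix}_{50\delta}(h)\cap A_{10\delta}(g)$ is bounded, yet $\langle g^{N},h\rangle$ is infinite dihedral for every $N$, never $\langle g^{N}\rangle*\langle h\rangle$. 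The paper's proof does not extract ``no nonzero power of $h$ preserves $\{g^{+},g^{-}\}$'' from the stated hypothesis either; at the decisive moment it invokes Lemma~\ref{1}, which belongs to the $\operatorname{Map}(S)$ setting and supplies that non-preservation from the assumption that $h$ does not preserve the isotopy class of $K$. You should therefore treat this as an additional input imported from the intended application rather than something derivable inside the abstract statement.
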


\begin{proof}
Recall that $\operatorname{Fix}_{50 \delta}(h)=\{x \in X| d(x, h(x)) \leq 50 \delta\}$. By assumption, there exist a constant $D \geq 0$ such that $\operatorname{Diam}(A_{10 \delta}(g) \cap \operatorname{Fix}_{50 \delta}(h) )\leq D.$ 

Given \(g, h \in G\), we use the conjugation notation \(g^{h}=h g h^{-1}\). Since $g \in G$ is a WWPD loxodromic element on $X$, there exists a constant $D^{\prime}$ such that for all \(h_{1}, h_{2} \in G\) satisfying
$\left(g^{h_{1}}\right)^{\pm} \neq \left(g^{h_{2}}\right)^{\pm},$
one has
$$
\operatorname{diam}\left(\pi_{A_{10 \delta}\left(g^{h_{2}}\right)}\left(A_{10 \delta}\left(g^{h_{1}}\right)\right)\right) \leq D^{\prime},
$$
where \(\pi_{A_{10 \delta}\left(g^{h_{2}}\right)}\left(A_{10 \delta}\left(g^{h_{1}}\right)\right)\) is the closest point
projection of \(A_{10 \delta}\left(g^{h_{1}}\right)\) onto \(A_{10 \delta}\left(g^{h_{2}}\right)\). This is implied by the discreteness discussed after Definition~\ref{DefWWPD}.

As $X$ is a geodesic $\delta$-hyperbolic space, there exist constants $K_{1}$, $K_{2}$ such that all $K_{1}$-local $(1, 10\delta)$-quasi-geodesics are $K_{2}$-global quasi-geodesics.

Set $\Delta= \operatorname{max}\{D, D^{\prime}, 1000 \delta, K_{1}, K_{2}^{2}\}$ and choose $C = 10\Delta+1000\delta$, then the natural map $\langle g^{N}\rangle * \langle h\rangle \to \langle g^{N}, h\rangle$ is an isomorphism, whose proof carries over word for word from \cite[Proposition 2.1]{AD18}.

If $\Gamma \leq \langle g^{N}, h\rangle$ contains an element $\gamma$ which is not conjugate into any subgroup of $\langle h\rangle$, then its cyclically reduced form contains at least one letter $g^{\pm N}$. Therefore, the stable translation length of $\gamma$ is strictly positive. It implies $\gamma$ is loxodromic. Consequently, if $\Gamma \leq \langle g^{N}, h\rangle$ is an elliptic subgroup, then $\Gamma$ contains no loxodromic elements, so $\Gamma$ must be contained in a conjugate of a subgroup of $\langle h\rangle$.

\end{proof}

\section{The proof of Theorem \ref{main}}

We now prove Theorem \ref{main}. Consider $h_1,\dots,h_n$ as in the statement, and let $K$ be a nondisplaceable essential finite type subsurface given by Lemma \ref{h}. In particular, no $h_i$ admits a representative restricting to the identity on $K$. Consider $\mathbb{X}$ a hyperbolic space associated with $K$ from the Horbez-Qing-Rafi Theorem.

We first prove the existence of infinitely many $K$-pseudo-Anosov elements that play ping pong with all the $h_i$ preserving the isotopy class of $K$. We then show that, by taking appropriate powers of these elements, we also obtain elements that play ping pong with every loxodromic or elliptic $h_i$ not preserving the isotopy class of $K$.

 \subsection{The case of elements preserving \texorpdfstring{$[K]$}{K}} 
 Assume that $h_1, \dots, h_n$ preserve the isotopy class of $K$. We always have $h(\mathcal{C}_{S}(K))=\mathcal{C}_{S}(K)$ and $h|_{\mathcal{C}_{S}(K)} \neq \mathrm{Id}_{\mathcal{C}_{S}(K)}$. Viewing $\operatorname{Map}(K, \partial K)$ as an acylindrical hyperbolic group for sufficiently complex $K$ (the genus $g(K) \geq 2$ or the boundary components $b(K) > 3$), $\operatorname{Map}(K, \partial K)$ has
property $P_{\text{naive}}$ \cite{AD18}: there exists a
$K$-pseudo-Anosov element $g$ that is a loxodromic element for the $\operatorname{Map}(S)$-action on $\mathbb{X}$ by Theorem \ref{WWPD} such that the pair $\{g, h_{i}\}$, $1\leq i\leq n$, generates a subgroup of $\mathrm{Map}(K, \partial K)$ which is a free product.
More precisely, there are infinitely many independent such $K$-pseudo-Anosov elements with disjoint pairs of fixed points \cite[Proposition 1.3]{AD18}.

\subsection{The case of loxodromic elements that do not preserve \texorpdfstring{$[K]$}{K}}

In this case, we will use the canonical ping-pong lemma to produce free subgroups in $\operatorname{Map}(S)$ (cf. \cite[Theorem 4.3.1]{Cla17}).

\begin{lemma}[Ping-Pong Lemma]\label{pingpong}
    Let $G$ be a group generated by two elements $a$ and $b$ of infinite order. Suppose there is a $G$-action on a set $X$ such that there are non-empty subsets $A, B \in X$ with $B$ not included in A  and such that for all $n \in \mathbb{Z} \backslash \{0\}$ we have $a^{n}(B) \subset A$ and $ b^{n}(A) \subset {B}.$ Then $G$ is freely generated by $\{a,b\}$. 
\end{lemma}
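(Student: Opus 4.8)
The plan is to prove the statement by showing that the canonical surjection of the free group on two generators onto $G$ is injective, which is equivalent to $\{a,b\}$ freely generating $G$. Let $F$ be the abstract free group on two symbols $\alpha,\beta$, and let $\phi\colon F\to G$ be the homomorphism with $\phi(\alpha)=a$ and $\phi(\beta)=b$; it is surjective since $a,b$ generate $G$. It therefore suffices to prove that $\phi(w)\neq 1$ in $G$ for every nonempty reduced word $w\in F$. Every such $w$ is an alternating product of nonzero powers of $\alpha$ and $\beta$, and I will organize the argument according to whether the first and last syllables of $w$ are powers of $\alpha$ or of $\beta$. (The hypothesis that $a$ and $b$ have infinite order guarantees that $\langle a\rangle$ and $\langle b\rangle$ are infinite cyclic, so that injectivity of $\phi$ is exactly the assertion $G\cong\langle a\rangle*\langle b\rangle$.)

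The heart of the matter is a telescoping computation for words that begin and end with a power of $\alpha$, which I will call words of \emph{type $aa$}. If $w=\alpha^{n_1}\beta^{m_1}\cdots\beta^{m_{k-1}}\alpha^{n_k}$ with all exponents nonzero, then applying the hypotheses $a^{n}(B)\subseteq A$ and $b^{n}(A)\subseteq B$ (valid for all $n\neq 0$) from right to left gives $a^{n_k}(B)\subseteq A$, then $b^{m_{k-1}}(A)\subseteq B$, and so on, landing alternately in $A$ and $B$, until the outermost factor $a^{n_1}$ sends $B$ into $A$. Hence $\phi(w)(B)\subseteq A$. Since $B\not\subseteq A$ by hypothesis, $\phi(w)$ cannot act as the identity on $X$, so $\phi(w)\neq 1$. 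This settles the type-$aa$ case and, in particular, shows that every nonzero power of $a$ is nontrivial.

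To reduce the remaining forms to the type-$aa$ case I conjugate by a power of $\alpha$, which is harmless because $\phi(w)=1$ if and only if $\phi(\alpha^{p}w\alpha^{-p})=a^{p}\phi(w)a^{-p}=1$. Given any reduced $w$ not already of type $aa$ (that is, a word ending or beginning with a $\beta$-power, or a bare power of $\beta$), I choose an integer $p\neq 0$ so that $\alpha^{p}w\alpha^{-p}$ stays reduced and both begins and ends with a nonzero power of $\alpha$; one only has to avoid the at most two values of $p$ that would cancel against a boundary $\alpha$-syllable of $w$. The conjugate is then of type $aa$, hence nontrivial by the previous paragraph, so $\phi(w)\neq 1$ as well.

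Collecting the cases shows $\phi$ is injective, hence $G$ is freely generated by $\{a,b\}$. The one genuine subtlety, and the step I expect to require the most care, is the asymmetry of the hypotheses: we are given $B\not\subseteq A$ but no companion statement $A\not\subseteq B$, no disjointness, and no distinguished basepoint, so the naive strategy of tracking a single orbit point through the word is unavailable and $A,B$ play no symmetric roles. The conjugation trick is precisely what lets every reduced word be tested against the one usable configuration $\phi(w)(B)\subseteq A$ versus $B\not\subseteq A$; the only thing that must be checked carefully is that the conjugating power of $\alpha$ can always be selected to avoid cancellation at either end, so that the type-$aa$ form is genuinely attained.
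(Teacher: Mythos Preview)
Your proof is correct and is the standard argument for the Ping-Pong Lemma; the telescoping for type-$aa$ words together with the conjugation trick handles the asymmetry of the hypothesis $B\not\subseteq A$ exactly as it should. The paper itself does not give a proof of this lemma---it merely quotes the statement with a citation to \cite{Cla17}---so there is nothing to compare against beyond noting that your argument is the classical one found in the cited reference.
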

Recall that two loxodromic isometries $h$ and $g$ of a $\delta$-hyperbolic space $X$ are \emph{independent} if their fixed
point sets in $\partial X$ are disjoint, and the action of a group $G$ on $X$ is \emph{non-elementary} if $G$ contains two independent loxodromic isometries of $X$. If $h \in \operatorname{Map}(S)$ is loxodromic but does not preserve the isotopy class of $K$, then every $K$-pseudo-Anosov element $g\in\operatorname{Stab}_{\operatorname{Map}(S)}(K)$ is independent with $h$. By Lemma \ref{1}, $h$ does not preserve or flip the pair $(g^{+}, g^{-})$ on the boundary of $\mathcal{C}_{S}(K)$, so there exist two disjoint smaller neighborhoods \(U^{-}, U^{+}\subset \partial X\) of \(g^{-}, g^{+}\) respectively such that $h(U^{-}) \cap (U^{-}\cup U^{+})= \varnothing$ and $h(U^{+}) \cap (U^{-}\cup U^{+})= \varnothing$. Therefore, we have
$$ h\left(U^{+} \cup U^{-}\right)\subset \overline{\partial X \backslash  (U^{+} \cup U^{-})},$$ and the same inclusion for $h^{-1}$.
Finally, the dynamics of $g$ gives the following: for $M \geq 1$ and any integer $N \geq M$, we have
 $$g^{N}\left(\overline{\partial X \backslash U^{-}}\right) \subset U^{+} ~\text{and}~g^{-N}\left(\overline{\partial X \backslash U^{+}}\right) \subset U^{-}.$$ By Lemma \ref{pingpong}, take $A:= \overline{\partial X \backslash  (U^{+} \cup U^{-})}$ and $B:=U^{-}\cup U^{+}$, then $g^{N}$ and $h$ freely generate a subgroup of $\operatorname{Map}(S)$.

 \subsection{The case of elliptic elements that do not preserve \texorpdfstring{$[K]$}{K}}

Assume that $h \in \operatorname{Map}(S)$ is elliptic but does not preserve the isotopy class of $K$. Every $K$-pseudo-Anosov elements $g$ is a WWPD loxodromic element for the $\operatorname{Map}(S)$-action on $\mathbb{X}$ (Theorem \ref{WWPD}). By Lemma \ref{1}, there exists a constant $D \geq 0$ satisfying $\operatorname{diam}(\operatorname{Fix}_{50\delta}(h) \cap A_{10\delta}(g)) \leq D$. By Proposition \ref{pp}, there is a constant $C \geq 0$ such that for any integer $N$, the translation length of $g^{N}$ is at least $C$ and the group generated by $g^{N}$ and $h$ is the free product of $\langle g^{N} \rangle$ and $\langle h \rangle$.

\subsection{The case of parabolic elements that do not preserve \texorpdfstring{$[K]$}{K}}
Recall $\mathbb{X}$ is a quasi-tree of curve graphs $\{\mathcal{C}_{S}(K_{1})| K_{1} \in (\operatorname{Map}(S) \cdot K)\}$. Let $\dot{\mathbb{X}}$ be the projection quasi-tree by collapsing each curve graph $\mathcal{C}_{S}(K_{1})$ to a vertex $V_{K_{1}}$. One can describe the boundary $\partial{\mathbb{X}}$ as the union of $\partial{\dot{\mathbb{X}}}$ and all boundaries $\partial{\mathcal{C}_{S}(K_{1})} $ for $K_{1} \in (\operatorname{Map}(S) \cdot K)$.

Assume that $h$ is parabolic. Then there exist $\xi \in \partial \mathbb{X}$ and $v \in \mathbb{X}$ such that
$h^{n}(v) \to \xi$ as $n \to \infty$, and $h(\xi)=\xi$. Observe that $h$ induces an isometry
$\dot{h} \in \operatorname{Isom}(\dot{\mathbb{X}})$. Since $h$ is parabolic, we have
\(
\frac{1}{n} d_{\mathbb{X}}(h^{n}(v),v)\rightarrow 0~ \text{as} ~ n\to\infty.
\)
Hence, in $\dot{\mathbb{X}}$ we also have
\(
\frac{1}{n} d_{\dot{\mathbb{X}}}(\dot{h}^{n}(\dot{v}),\dot{v})\rightarrow 0,
\)
so $\dot{h}$ is either elliptic or parabolic. By \cite{Man06}, every element of a group acting on a
quasi-tree acts either loxodromically or elliptically. Therefore, $\dot{h}$ is elliptic and it has a bounded orbit in $\dot{\mathbb{X}}$. It follows that $\xi \notin \partial \dot{\mathbb{X}}$, and thus there exists
$K_{1} \in (\operatorname{Map}(S)\cdot K)$ such that $\xi \in \partial \mathcal{C}_{S}(K_{1})$.
By the ending lamination argument (see Lemma \ref{1}) and the fact that $h(\xi)=\xi$, the map $h$ preserves the isotopy
class of $K_{1}$ and induces an isometry of $\mathcal{C}_{S}(K_{1})$. However, no element of
$\operatorname{Map}(K_{1})$ acts parabolically (\cite[Corollary 1.3]{BG18}), while its orbits
accumulate on $\partial \mathcal{C}_{S}(K)$, contradicting our assumption. Therefore, there is no
element of $\operatorname{Map}(S)$ that acts parabolically on $\mathbb{X}$.

Now we give the proof of Theorem \ref{main}. 
 
\begin{proof}[Proof of Theorem \ref{main}.]
Let \( F = \{h_1, h_2, \dots, h_n\} \) be a finite collection of non-trivial elements of \( \operatorname{Map}(S) \). For each \( h_i \in F \), where \( 1 \leq i \leq n \), we analyze the element \( h_i \) via the isometric action of \( \operatorname{Map}(S) \) on \( \mathbb{X} \). Each \( h_i \) is either elliptic or loxodromic. In any case, there exists a \( K \)-pseudo-Anosov element in \( \operatorname{Map}(S) \) for the action on \( \mathbb{X} \), which only depends on $K$ and works for any $h \in \operatorname{Map}(S)$ such that the restriction $h|_{K} \neq \mathrm{Id}$.

Thus, for all \( i \), we have \( \langle g, h_i \rangle \cong \langle g \rangle * \langle h_i \rangle \), meaning the group generated by \( g \) and \( h_i \) is isomorphic to the free product of \( \langle g \rangle \) and \( \langle h_i \rangle \).    
\end{proof}

\bibliographystyle{alpha}
\bibliography{sample}

\end{document}